\newtheorem{theorem}{Theorem}
\newtheorem{lemma}[theorem]{Lemma}
\newtheorem{question}[theorem]{Question}
\newenvironment{remark}[1][Remark]{\begin{trivlist}
\addtocounter{theorem}{1} \item[\hskip \labelsep {\itshape #1 \thetheorem}.]} {\end{trivlist}}
\newenvironment{example}[1][Example]{\begin{trivlist}
\addtocounter{theorem}{1} \item[\hskip \labelsep {\itshape #1 \thetheorem}.]} {\end{trivlist}}
\newcommand{\manuallabel}[2]{\def\@currentlabel{#2}\label{#1}}
\DeclareMathOperator{\conv}{conv}
\newcommand*{\Scale}[2][4]{\scalebox{#1}{$#2$}}%
\begin{document}

\title{A combinatorial interpretation for Schreyer's tetragonal invariants}
\author{Wouter Castryck\footnote{Supported by FWO-Vlaanderen.}\phantom{ } and Filip Cools}
\date{}

\maketitle

\begin{abstract}
  \noindent Schreyer has proved that the graded Betti numbers of a
  canonical tetragonal curve are determined by two integers
  $b_1$ and $b_2$, associated to the curve through a certain geometric construction.
  In this article we prove that in the case
  of a smooth projective tetragonal curve on a toric surface, these integers
  have easy interpretations in terms of the Newton polygon of its
  defining Laurent polynomial. We can use this to prove an intrinsicness
  result on Newton polygons of small lattice width.\\

\noindent \emph{MSC2010:} Primary 14H45, Secondary 14M25
\end{abstract}

\section{Introduction} \label{introsection}

Let $k$ be an algebraically closed field of characteristic $0$ and let $\mathbb{T}^2 = (k^\ast)^2$
be the two-dimensional torus over $k$. Let $\Delta \subset \mathbb{R}^2$
be a two-dimensional lattice polygon and consider the associated toric surface
$\text{Tor}(\Delta)$ over $k$, i.e.\ the Zariski closure of the image of
\[ \varphi_\Delta : \mathbb{T}^2 \hookrightarrow \mathbb{P}^{\sharp (\Delta \cap \mathbb{Z}^2) - 1} : (\alpha,\beta) \mapsto
 (\alpha^i \beta^j)_{(i,j) \in \Delta \cap \mathbb{Z}^2}.
\]
Let
\[ f = \sum_{(i,j) \in \mathbb{Z}^2} c_{i,j} x^iy^j \in k[x^{\pm 1}, y^{\pm 1}] \]
be an irreducible Laurent polynomial and consider its Newton polygon
\[ \Delta(f) = \conv \left\{ \, \left. (i,j) \in \mathbb{Z}^2 \, \right| \, c_{i,j} \neq 0 \, \right\}. \]
Let $U_f \subset \mathbb{T}^2$ be the curve cut out by $f$.
We say that $f$ is $\Delta$-non-degenerate
if $\Delta(f) \subset \Delta$ and
for every face
$\tau \subset \Delta$ (vertex, edge, or $\Delta$ itself) the system
\[ f_\tau = \frac{\partial f_\tau}{\partial x} = \frac{\partial f_\tau}{\partial y} = 0 \]
has no solutions in $\mathbb{T}^2$.
Here
\[ f_\tau = \sum_{(i,j) \in \tau \cap \mathbb{Z}^2} c_{i,j} x^i y^j.\]
For a fixed instance of $\Delta$ and given that $\Delta(f) \subset \Delta$, the condition of $\Delta$-non-degeneracy is generically satisfied.
It implies that the
Zariski closure $C_f$ of $\varphi_\Delta(U_f)$ inside $\text{Tor}(\Delta)$ is non-singular.
A curve that is isomorphic to $C_f$ for some $\Delta$-non-degenerate Laurent polynomial
is in turn called $\Delta$-non-degenerate.\\

\noindent Non-degenerate curves form an attractive class of objects from the point
of view of explicit algebraic geometry.
On the one hand they vastly generalize
well-known families such as elliptic curves, hyperelliptic curves, trigonal curves\footnote{Strictly spoken, there do exist
trigonal curves that are not non-degenerate; for example see \cite[Lem.\,4.4]{CaCo1}. But all trigonal curves are `morally' non-degenerate, in the sense
that they can always be embedded in a toric surface, which is sufficient for most applications. See also the remark at the end of this section.}, smooth plane curves,
$C_{a,b}$ curves, \dots \ covering a much broader range of geometric situations.
On the other hand they remain very tangible, because many important geometric invariants
can be
told by simply looking at the combinatorics of $\Delta$. Two notable instances are:
\begin{itemize}
 \item the (geometric) \emph{genus} $g$, which equals $\sharp (\Delta^{(1)} \cap \mathbb{Z}^2)$, where $\Delta^{(1)}$ is
 the convex hull of the interior lattice points of $\Delta$; see \cite{Khovanskii};
 \item the \emph{gonality} $\gamma$, which equals $\text{lw}(\Delta)$, except if $\Delta \cong 2\Upsilon$ or
 $\Delta \cong d\Sigma$ for some $d \geq 2$, where
 \[ \Upsilon = \conv \{ (-1,-1), (1,0), (0,1) \} \qquad \text{and} \qquad \Sigma = \conv \{ (0,0), (1,0), (0,1) \}, \]
 in which case it equals $\text{lw}(\Delta) - 1$;
 here $\text{lw}$ denotes the lattice width, and $\cong$ indicates unimodular equivalence; see \cite[Lem.\,6.2]{CaCo1}.
 (Shorter characterization: $\gamma = \text{lw}(\Delta^{(1)}) + 2$ except if $\Delta \cong 2\Upsilon$ in which case $\gamma = 3$.)
\end{itemize}
Similar interpretations exist for the \emph{Clifford index} and the \emph{Clifford dimension} \cite[\S8]{CaCo1},
and in some cases for the \emph{minimal degree of a plane model} \cite{latticesize}. The current paper extends the list of combinatorial
features of non-degenerate curves, by focusing on tetragonal curves.
Namely, we give the following
interpretation for the invariants $b_1$ and $b_2$, as introduced by Schreyer
in \cite[(6.2)]{Schreyer}.
The definition of these invariants will be recalled in Section~\ref{sect_schreyersinv} below.

\begin{theorem} \label{schreyerinvinl}
Let $C$ be a tetragonal $\Delta$-non-degenerate curve.
Then Schreyer's corresponding set of invariants $\{ b_1,b_2 \}$ is given by
\[ \left\{ \ \sharp (\partial \Delta^{(1)} \cap \mathbb{Z}^2) - 4 \, , \ \sharp (\Delta^{(2)} \cap \mathbb{Z}^2) - 1 \ \right\}. \]
\end{theorem}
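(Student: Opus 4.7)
My plan is to use the concrete description of Schreyer's invariants recalled in Section~\ref{sect_schreyersinv}: for a tetragonal curve $C$ with pencil $|D|=g^1_4$, the canonical image lies on the rational normal scroll $S$ swept out by the pencil, and the relative ideal $\mathcal{I}_{C/S}$ is resolved by a complex whose middle term is a rank-two bundle of splitting type $\mathcal{O}_{\mathbb{P}^1}(b_1)\oplus\mathcal{O}_{\mathbb{P}^1}(b_2)$ (twisted by $\mathcal{O}_S(-2H)$). The unordered pair $\{b_1,b_2\}$ can be extracted from the dimensions $h^0(C,nK_C-mD)$ for small $n,m$, together with the scroll type; in particular $b_1+b_2=g-5$.

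Next I would put $\Delta$ in a convenient normal form. Since $C$ is tetragonal and non-degenerate, the gonality formula forces $\text{lw}(\Delta)=4$ — the exceptional polygons $2\Upsilon$ and $d\Sigma$ yield gonalities different from $4$ in the relevant range, so can be set aside. After a unimodular transformation assume $\Delta\subset\mathbb{R}\times[0,4]$, so that projection onto the first coordinate realizes the $g^1_4$. The elementary observation
\[ \sharp(\Delta^{(1)}\cap\mathbb{Z}^2) \ = \ \sharp(\partial\Delta^{(1)}\cap\mathbb{Z}^2) \, + \, \sharp(\Delta^{(2)}\cap\mathbb{Z}^2), \]
combined with Khovanskii's genus formula, already tells us that the sum of the two candidate integers is $g-5=b_1+b_2$. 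It therefore suffices to match a single one of them.

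For this I would compute $h^0(C,K_C-nD)$ directly using Khovanskii-type identifications: sections of $K_C$ correspond to lattice points of $\Delta^{(1)}$, and vanishing along $n$ generic fibers of the $x$-projection corresponds to restricting to suitable horizontal substrips of $\Delta^{(1)}$. The scroll type $(e_1,\ldots,e_4)$ should then be governed by the lengths of the horizontal slices $\Delta\cap(\mathbb{R}\times\{j\})$, while the "middle" jumps that read off $\{b_1,b_2\}$ should split into one contribution coming from $\partial\Delta^{(1)}$ (the lattice points lying on the outer strip at heights $j=1$ and $j=3$, together with the extremal lattice points at height $j=2$) and one coming from $\Delta^{(2)}$ (the genuinely interior lattice points at height $j=2$), yielding exactly the claimed pair.

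The main obstacle I foresee is isolating the rank-two bundle $N$ on $\mathbb{P}^1$ from the toric data of $\text{Tor}(\Delta)$ and pinning down its splitting type rather than only its degree. The sum $b_1+b_2$ drops out almost for free from a lattice-point count, but recovering the individual values requires tracking the jumps of $h^0(N\otimes\mathcal{O}_{\mathbb{P}^1}(-k))$ in $k$ and translating these into the finer statistic "$\partial\Delta^{(1)}$ vs.\ $\Delta^{(2)}$" on the iterated interior polygons. Particular care will be needed for degenerate cases — very unbalanced scrolls, or situations where $\Delta^{(2)}$ is empty, one-dimensional, or where one of $b_1,b_2$ becomes zero or negative — which will likely demand separate bookkeeping.
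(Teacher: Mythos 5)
Your reduction is sound up to one point: the identity $\sharp(\Delta^{(1)}\cap\mathbb{Z}^2)=\sharp(\partial\Delta^{(1)}\cap\mathbb{Z}^2)+\sharp(\Delta^{(2)}\cap\mathbb{Z}^2)$ together with Khovanskii's genus formula and $b_1+b_2=g-5$ does show that the two candidate integers have the right sum, so it would indeed suffice to match one of them. The gap is that the mechanism you propose for that matching cannot work. The data you plan to compute --- the dimensions $h^0(C,nK_C-mD)$ read off from horizontal slices of $\Delta^{(1)}$ --- determines the scroll type $(e_1,e_2,e_3)$ but provably not the individual $b_i$. Concretely: twisting Schreyer's resolution of $\mathcal{I}_{C'}$ by $2H-kR$, the restriction map $H^0(\mathbb{P}(\mathcal{E}),2H-kR)\to H^0(C,2K_C-kD)$ is surjective precisely for $k\leq b_2+1$, and in that entire range the resulting identity is $h^0(S^2\mathcal{E}(-k))-h^0(C,2K_C-kD)=(b_1-k+1)+(b_2-k+1)=g-3-2k$, which depends only on $b_1+b_2$; outside that range the naive difference no longer computes $h^0(\mathcal{I}_{C'}(2H-kR))$ because $H^1$ of the ideal sheaf intervenes. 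The same cancellation occurs for higher twists $nH-kR$. So the ``middle jumps'' you hope to isolate are exactly the ones that are invisible in $h^0(nK_C-mD)$; your own closing caveat identifies this obstacle, but the proof stands or falls on overcoming it, and no route is offered.

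The paper's argument supplies the missing ingredient by exhibiting an explicit surface rather than counting sections: the strict transform $T'$ of $\text{Tor}(\Delta^{(1)})$ inside $\mathbb{P}(\mathcal{E})$ contains $C'$, and its class is computed by intersection theory ($T'\cdot H\cdot R=2$ because the fibers of the ruling cut out conics, and $T'\cdot H^2=2\text{Vol}(\Delta^{(1)})$ by the degree formula for toric surfaces), giving $T'\sim 2H-BR$ via Pick's theorem. Then either $T'$ coincides with Schreyer's $Y$, forcing $b_1=B$ and hence $b_2=B^{(1)}$, or a degree bound on the curve $Y\cap T'\supseteq C'$ combined with the combinatorial classification of Lemma~\ref{lemma_comb} pins down the remaining cases. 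Nothing in your outline produces such a surface. Two smaller issues: a $5\Sigma$-non-degenerate curve is a smooth plane quintic of gonality $4$, so the case $\Delta\cong 5\Sigma$ (lattice width $5$) cannot be discarded on the grounds that its gonality differs from $4$; the paper removes it by a change of coordinates that clips a vertex. Also, the correct normalization is $\text{lw}(\Delta^{(1)})=2$ with $\Delta$ in a strip of height $4$, which is what makes the $x$-projection a $g^1_4$.
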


\noindent Here $\partial$ denotes the boundary and $\Delta^{(2)} = \Delta^{(1)(1)}$ is
the convex hull of the interior lattice points of $\Delta^{(1)}$.

\begin{example} The Laurent polynomial $f = 1 + y^2 - x^6y^2 + x^6y^4 \in \mathbb{C}[x,y]$ is $\Delta$-non-degenerate,
where $\Delta$ is as follows.
\begin{center}
  \includegraphics[height=2.5cm]{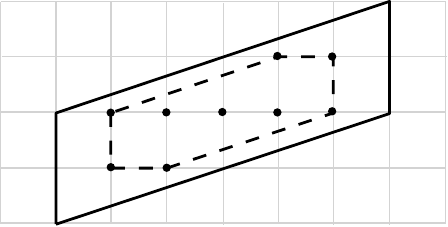}
\end{center}
The dashed lines indicate $\Delta^{(1)}$.
One verifies, purely by looking at the Newton polygon, that $C_f$ is a tetragonal curve of genus $9$ with $b_1 = b_2 = 2$.
(In view of \cite[Cor.\,6.3, Thm.\,9.1]{CaCo1}, one can even say that it carries a unique $g^1_4$, whose scrollar invariants
read $1,1,4$; see Remark~\ref{scrollardef} below for more background on this terminology.)
\end{example}

\noindent Schreyer's invariants are known to determine the \emph{Betti diagram of the canonical ideal}, and vice versa \cite[(6.2)]{Schreyer}.
In particular, Theorem~\ref{schreyerinvinl} implies that in the tetragonal case, the Betti diagram is 
combinatorially determined. We believe
that this holds in much greater generality (work in progress).\\

\noindent A second aim of this paper is to initiate a discussion on the \emph{intrinsicness} of $\Delta$.
Namely, given the many
geometric invariants that are encoded in the Newton polygon, one might wonder
to what extent it is possible to reconstruct $\Delta$ from the abstract geometry of
a given $\Delta$-non-degenerate curve $C_f$. The best one can hope for is to
find back $\Delta$ up to unimodular equivalence,
because unimodular transformations correspond to
automorphisms of $\mathbb{T}^2$.
Another relaxation is that (usually) one can only expect to
recover $\Delta^{(1)}$, rather than all of
$\Delta$. For example, let
$f \in k[x, y]$ be $d\Sigma$-non-degenerate for some integer $d \geq 2$ and
let $(x_0,y_0) \in U_f$ be sufficiently
generic.
Then $f' = f(x + x_0, y+y_0)$ is $\Delta$-non-degenerate, where $\Delta$ is obtained
from $d\Sigma$ by clipping off the point $(0,0)$.
In this case $\Delta \not \cong d\Sigma$, while clearly
$C_f \cong C_{f'}$.
More generally, pruning a vertex off a lattice polygon $\Delta$ without affecting its interior
boils down to forcing the curve through a certain non-singular point of $\text{Tor}(\Delta)$, which
is usually not intrinsic.
One is naturally led to the following question.

\begin{question}[intrinsicness] \label{intrinsicquestion}
 Let $\Delta, \Delta'$ be two-dimensional lattice polygons 
 for which there exists a curve that is both $\Delta$-non-degenerate and $\Delta'$-non-degenerate.
 Does it follow that $\Delta^{(1)} \cong \Delta'^{(1)}$?
\end{question}

\noindent Our conjecture is
that for `most' pairs of polygons the answer is yes. E.g., this is known to be true as soon as
\begin{itemize}
 \item[(a)] $\Delta^{(1)}$ is one-dimensional, because a $\Delta$-non-degenerate curve is hyperelliptic
 of genus $g \geq 2$ if and only if $\Delta^{(1)} \cap \mathbb{Z}^2$ consists of $g$ collinear points \cite[Lem.\,3.2.9]{Koelman},
 \item[(b)] $\Delta^{(1)} = \emptyset$ or
$\Delta^{(1)} \cong (d-3)\Sigma$ for
some integer $d \geq 3$, because a $\Delta$-non-degenerate curve is abstractly
isomorphic to a smooth plane curve if and only if $\Delta^{(1)}$ is a multiple of the standard simplex (up to equivalence) \cite[Cor.\,8.2]{CaCo1}.
 \item[(c)] $\Delta^{(1)} \cong [0,a] \times [0, b]$ for some integers $a \geq b \geq -1$ with $(a+1)(b+1) \neq 4$, 
 because a $\Delta$-non-degenerate curve of genus $g \neq 4$ can be embedded in $\mathbb{P}^1 \times \mathbb{P}^1$
 if and only if $\Delta^{(1)}$ is a standard rectangle (up to equivalence); see\footnote{This is an article in preparation.
 In the meantime, a proof can be found in \S5.10 of our unpublished
 \texttt{arXiv} paper \texttt{1304.4997}.} \cite{hirzebruch}.
 The assumption $g \neq 4$ is necessary: see the discussion following (d) below.
\end{itemize}
Let us indicate why we expect Question~\ref{intrinsicquestion}
to have an affirmative answer for many more instances of $\Delta$, while gathering some material that will be needed in
Section~\ref{sect_schreyersinv}. Our starting point is a theorem by Khovanskii \cite{Khovanskii}, stating
that there exists a canonical divisor $K_\Delta$ on $C_f$ such that a basis for
the Riemann-Roch space $H^0(C_f, K_\Delta)$ is given by
\begin{equation} \label{canemb}
\left\{ x^iy^j \right\}_{(i,j) \in \Delta^{(1)} \cap \mathbb{Z}^2}.
\end{equation}
Here $x,y$ are to be viewed as functions on $C_f$ through $\varphi_\Delta$. Note that one recovers
the statements that $g = \sharp (\Delta^{(1)} \cap \mathbb{Z}^2)$ and
that $C_f$ is hyperelliptic if and only if $\Delta^{(1)}$ is one-dimensional; see \cite[Lem.\,5.1]{castryckvoight} for more
details. If $\Delta^{(1)}$ is two-dimensional,
then Khovanskii's theorem implies
that the
canonical model $C_f^\text{can}$ of $C_f$ satisfies
\[ C_f^\text{can} \subset \text{Tor}(\Delta^{(1)}) \subset \mathbb{P}^{g-1}. \]
But surfaces of the form $\text{Tor}(\Delta^{(1)})$ are very special. Most notably,
they are of low degree, and they are generated by binomials. The idea is that
they are so special that there is room for at most one such surface containing
$C^\text{can}_f$. This idea is not always true, but the exceptions seem rare.
If it \emph{is} true, then the following general and seemingly new statement
allows one to recover $\Delta^{(1)}$. A proof will be given in Section~\ref{section_from_surface_to_polygon}.

\begin{theorem} \label{lem_equiv}
Let $\Delta, \Delta'$ be two-dimensional lattice polygons with
\[ \sharp (\Delta \cap \mathbb{Z}^2) - 1 =
\sharp (\Delta' \cap \mathbb{Z}^2) - 1 = N,\]
and suppose that $\emph{Tor}(\Delta) , \emph{Tor}(\Delta') \subset \mathbb{P}^N$
can be obtained from one another using a projective transformation.
Then $\Delta \cong \Delta'$.
\end{theorem}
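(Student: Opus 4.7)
The plan is to reduce the statement to the case where the projective transformation $A \in \text{PGL}_{N+1}$ realizing the equivalence is equivariant with respect to the two natural torus structures on $\text{Tor}(\Delta)$ and $\text{Tor}(\Delta')$; once this reduction is made, the conclusion $\Delta \cong \Delta'$ will drop out from a character-lattice computation. Write $T, T' \subset \text{PGL}_{N+1}$ for the $2$-tori coming from the toric embeddings of $X := \text{Tor}(\Delta)$ and $X' := \text{Tor}(\Delta')$; thus $T$ acts on the $\mathbb{P}^N$-coordinate labelled by $(i,j) \in \Delta \cap \mathbb{Z}^2$ through the character $(i,j) \in \mathbb{Z}^2$, and analogously for $T'$. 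Conjugating $T$ by $A$ yields a third $2$-torus $A T A^{-1} \subset \text{PGL}_{N+1}$ which also stabilizes $X' = A(X)$ and acts on it with a dense open orbit.

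The heart of the argument is the conjugacy reduction. Consider $H := \text{Stab}_{\text{PGL}_{N+1}}(X')$, a closed algebraic subgroup of $\text{PGL}_{N+1}$; it contains both $T'$ and $A T A^{-1}$. Since $X'$ spans $\mathbb{P}^N$, the group $H$ acts faithfully on $X'$, so every torus in $H$ acts faithfully on the two-dimensional surface $X'$; in particular, $T'$ and $A T A^{-1}$—each already having a dense open orbit—are maximal tori in $H$. Establishing this maximality is the delicate point of the argument, and amounts to a standard fact about the automorphism group of a projective toric surface (for instance, via Demazure's description of $\text{Aut}(X')^\circ$ in terms of roots). Granting it, the classical conjugacy theorem for maximal tori in a linear algebraic group produces $\psi \in H$ with $\psi (A T A^{-1}) \psi^{-1} = T'$, and after replacing $A$ by $\psi \circ A$ (still a projective transformation sending $X$ to $X'$) we may assume $A T A^{-1} = T'$. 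In other words, $A$ intertwines the two torus actions via an algebraic-group isomorphism $\phi = \mathrm{conj}_A \colon T \to T'$.

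In this equivariant situation the conclusion is routine. Identifying $T, T' \cong \mathbb{T}^2$ via $\varphi_\Delta$ and $\varphi_{\Delta'}$, the map $\phi$ becomes a self-automorphism of $\mathbb{T}^2$ as an algebraic variety. A direct inspection of the units of $k[x^{\pm 1}, y^{\pm 1}]$ shows that $\text{Aut}(\mathbb{T}^2) = (k^\ast)^2 \rtimes \text{GL}_2(\mathbb{Z})$, so $\phi$ decomposes as a translation followed by the action of some matrix $M \in \text{GL}_2(\mathbb{Z})$. Equivariance then forces the pullback $A^\ast$ to intertwine the $\mathbb{T}^2$-weight decompositions on the two $(N+1)$-dimensional subspaces of $H^0(X, \mathcal{O}_X(1))$ and $H^0(X', \mathcal{O}_{X'}(1))$ coming from the hyperplane sections. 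The resulting bijection matches the sets of appearing weights $\Delta \cap \mathbb{Z}^2$ and $\Delta' \cap \mathbb{Z}^2$ (read inside the character lattice $\mathbb{Z}^2$) via $M$, up to a common shift $v \in \mathbb{Z}^2$ accounting for the fact that the torus linearization of $\mathcal{O}_X(1)$ is only canonical up to twisting by a character. Passing to convex hulls gives $M^\top \Delta' + v = \Delta$, which is precisely the required unimodular equivalence $\Delta \cong \Delta'$.
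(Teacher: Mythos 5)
Your proposal is correct, and it takes a genuinely different route from the paper. The paper first invokes Berchtold's theorem (two abstractly isomorphic toric varieties admit a \emph{toric} isomorphism) to reduce to automorphisms of a single $\mathrm{Tor}(\Delta)$, and then uses the Bruns--Gubeladze structure theorem, which generates the automorphism group of the embedded toric surface by toric automorphisms, torus translations, and the elementary automorphisms $e_v^\lambda$ attached to column vectors; one checks that each generator preserves the polygon attached to a divisor class. This proves the stronger intermediate statement that \emph{any} abstract isomorphism of toric surfaces matches up divisor polygons. You instead stay inside $\mathrm{PGL}_{N+1}$: you conjugate the two dense tori into each other inside the stabilizer $H$ of the image surface, so that the projective equivalence becomes torus-equivariant, and then recover $\Delta\cap\mathbb{Z}^2$ as the weight multiset of the torus action on the ambient coordinates. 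This avoids both Berchtold and Bruns--Gubeladze and generalizes painlessly to higher-dimensional toric embeddings. The one point you flag as delicate --- that a $2$-torus in $H$ with a dense orbit is maximal --- does hold, but be careful with the citation: Demazure's theorem is stated for \emph{smooth} complete toric varieties, and $\mathrm{Tor}(\Delta)$ need not be smooth (it is always simplicial in dimension $2$, so Cox's extension applies). In fact no citation is needed: $H$ acts faithfully on $X'$ because a projective transformation fixing $X'$ pointwise has a union of linear subspaces as fixed locus, and the irreducible spanning $X'$ forces that locus to be all of $\mathbb{P}^N$; then any torus $S\supseteq T'$ centralizes $T'$, hence permutes its orbits and preserves the unique dense orbit $U\cong T'$, on which it must act by translations, giving an injection $S\hookrightarrow T'$ and so $S=T'$. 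With that supplied (and the harmless remark that conjugacy of maximal tori in the possibly disconnected $H$ reduces to $H^\circ$), your argument is complete.
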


\noindent Using this, we can immediately extend the above list to the case where
\begin{itemize}
 \item[(d)] $\sharp (\Delta^{(1)} \cap \mathbb{Z}^2) \geq 5$ and $\Delta^{(2)} = \emptyset$, which holds if and only if
 $C_f$ is trigonal of genus $g \geq 5$, or isomorphic to a smooth plane quintic \cite[\S8]{CaCo1}.
In this case $\text{Tor}(\Delta^{(1)})$ can be characterized as the unique irreducible surface containing
$C_f^\text{can}$ that is generated by quadrics. Indeed, the fact that it is
generated by quadrics follows from \cite{Koelman2}, while uniqueness follows from Petri's theorem \cite{saintdonat}.
\end{itemize}
The above argument breaks down in the genus $4$ case where $\Delta \cong 2\Upsilon$, because $\text{Tor}((2\Upsilon)^{(1)}) = \text{Tor}(\Upsilon)$ is not generated by quadrics.
And indeed, using this, it is not hard to cook up $2\Upsilon$-non-degenerate
curves that are also $[0,3] \times [0,3]$-non-degenerate, and $2\Upsilon$-non-degenerate curves
that are also $\conv \{(0,0), (4,0), (0,2)\}$-non-degenerate. (See \S5.6 of our unpublished
\texttt{arXiv} paper \texttt{1304.4997} for an extended discussion; see also Example~\ref{example0mod4} below.)\\

\noindent In Section~\ref{sect_schreyersinv} we will give a similar but more complicated recipe
for recovering $\text{Tor}(\Delta^{(1)})$ in most tetragonal cases. More precisely,
we extend the list with the situation where
\begin{itemize}
 \item[(e)] $\text{lw}(\Delta^{(1)}) = 2$ and $\sharp (\partial \Delta^{(1)} \cap \mathbb{Z}^2) \geq \sharp (\Delta^{(2)} \cap \mathbb{Z}^2) + 5$,
 which holds if and only if $C_f$ is tetragonal and $b_1 \geq b_2 + 2$. In this case $\text{Tor}(\Delta^{(1)})$
 can be characterized as the unique surface containing $C_f^\text{can}$ that is linearly
 equivalent to $2H - b_1R$, when viewed as a divisor inside
 the scroll spanned by a $g^1_4$.
\end{itemize}
More explanation will be given in Section~\ref{intrinsicness}.
Of course, in establishing this, we will make extensive use of Theorem~\ref{schreyerinvinl} and its proof.

\begin{remark}
Even though we formulate our results in terms of non-degenerate curves,
they remain valid for the slightly more general class of
\emph{arbitrary} smooth curves in toric surfaces.
Indeed, to a smooth (non-torus-invariant) curve $C$
in a toric surface $\varphi : \mathbb{T}^2 \hookrightarrow X$ one can always associate a
`defining Laurent polynomial' $f \in k[x^{\pm 1}, y^{\pm 1}]$, by which we mean a
generator of the ideal
of $\varphi^{-1}C$. It is well-defined up to multiplication by $cx^iy^j$ for some
$c \in k^\ast$ and $(i,j) \in \mathbb{Z}^2$.
One then just proceeds with $f$ and $\Delta = \Delta(f)$, as if $f$ were $\Delta$-non-degenerate.
We refer to \cite[\S4]{CaCo1}
for a more extended discussion.
\end{remark}

\section{Schreyer's tetragonal invariants} \label{sect_schreyersinv}

Let $C / k$ be a tetragonal curve of genus $g \geq 5$ and assume it to be canonically embedded
in $\mathbb{P}^{g-1}$. Fix a gonality pencil $g^1_4$ on $C$ and consider
$$S=\bigcup_{D\in g^1_4}\,\langle D\rangle\subset \mathbb{P}^{g-1},$$ where $\langle D\rangle\subset \mathbb{P}^{g-1}$
denotes the linear span of $D$.
One can show that $S$ is a rational normal threefold scroll whose type we denote by $(e_1,e_2,e_3)$, where
we assume $0\leq e_1\leq e_2\leq e_3$.
One has $\deg S=e_1+e_2+e_3=g-3$, and
$S$ is non-singular if and only if $e_1 > 0$. If $e_1 = 0$ then
the singularities are resolved by the natural map $\mu : \mathbb{P}(\mathcal{E}) \rightarrow S$, where $\mathcal{E}$ is the
locally free sheaf $\mathcal{O}(e_1)\oplus\mathcal{O}(e_2)\oplus\mathcal{O}(e_3)$ on $\mathbb{P}^1$;
if $e_1 > 0$ then $\mu$ is an isomorphism. The Picard group of $\mathbb{P}(\mathcal{E})$ is freely generated by the hyperplane class $H=[\mu^*(\mathcal{O}(1))]$ and the ruling class $R$ consisting of the fibers of the projection $\pi : \mathbb{P}(\mathcal{E}) \rightarrow \mathbb{P}^1$. The following intersection-theoretic identities hold: $H^3=g-3$,
$H^2\cdot R=1$ and $R\cdot R=0$. For more general background and references, see \cite[\S9]{CaCo1} and \cite[\S2-4]{Schreyer}.

\begin{remark} \label{scrollardef}
The numbers $e_1,e_2,e_3$ are called the \emph{scrollar invariants} of $C$ with respect to our $g_4^1$.
\end{remark}

\noindent Now let $C'$ be the strict transform under $\mu$ of our canonical curve $C\subset S$.
Schreyer proved that $C'$ is the complete intersection of surfaces $Y$ and $Z$ in $\mathbb{P}(\mathcal{E})$, with $Y\sim 2H-b_1R$, $Z\sim 2H-b_2R$, $b_1+b_2=g-5$ and $-1\leq b_2\leq b_1\leq g-4$.
He moreover showed
that $b_1,b_2$ are invariants of the curve: they depend neither on the canonical embedding, nor on the choice of the
$g^1_4$, nor on the choice of $Y$ and $Z$.
If $b_1 > b_2$, which is automatic if $g$ is even, then $Y$ is in fact unique,
 and $\mu(Y) \subset \mathbb{P}^{g-1}$ is
independent of the chosen $g^1_4$.
For these particular statements we refer to \cite[(6.2)]{Schreyer}.\\

\noindent The main goal of this section is to prove the combinatorial interpretation for
Schreyer's invariants $b_1,b_2$ stated in Theorem~\ref{schreyerinvinl}.
Using the abbreviations
\[
B = \sharp(\partial \Delta^{(1)}\cap\mathbb{Z}^2)-4, \qquad
B^{(1)} = \sharp(\Delta^{(2)}\cap\mathbb{Z}^2)-1, \]
we will in fact show:

\begin{theorem} \label{theorem_schreyer}
Let $f \in k[x^{\pm 1}, y^{\pm 1}]$ be non-degenerate with respect to its Newton polygon
$\Delta = \Delta(f)$, and suppose that $C_f$ is tetragonal.
Then its invariants $b_1,b_2$ statisfy $\{b_1,b_2\}=\{B,B^{(1)}\}$.
If moreover $B > B^{(1)}$ then the surface $\mu(Y)$ associated to
the canonical model $C_f^\emph{can}$ from Section~\ref{introsection} equals $\emph{Tor}(\Delta^{(1)})$.
\end{theorem}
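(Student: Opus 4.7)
The plan is to identify both Schreyer surfaces as strict transforms in $\mathbb{P}(\mathcal{E})$ of explicit surfaces through $C_f^{\text{can}}$. Tetragonality combined with the short gonality formula $\gamma = \text{lw}(\Delta^{(1)}) + 2$ forces $\text{lw}(\Delta^{(1)}) = 2$, so after a unimodular change of coordinates I may assume $\Delta^{(1)} \subset \mathbb{R} \times [0,2]$ with the $g^1_4$ given by the $x$-projection. By the combinatorial computation of scrollar invariants in \cite[\S9]{CaCo1}, the triple $(e_1, e_2, e_3)$ is then the sorted list of horizontal edge lengths $a_0, a_1, a_2$ of $\Delta^{(1)}$ at heights $0, 1, 2$.

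Khovanskii's theorem places $C_f^{\text{can}} \subset \text{Tor}(\Delta^{(1)}) \subset \mathbb{P}^{g-1}$. For fixed $x = \alpha$, the image of $\text{Tor}(\Delta^{(1)})$ in the ruling $\langle D_\alpha \rangle \cong \mathbb{P}^2$ is the conic $y \mapsto (\alpha^i y^j)_{(i,j) \in \Delta^{(1)}}$, so $\text{Tor}(\Delta^{(1)}) \subset S$ and its strict transform cuts each ruling in a degree-$2$ curve, forcing its class in $\mathbb{P}(\mathcal{E})$ to be $2H + bR$ for some $b$. Matching its degree $2 \operatorname{Area}(\Delta^{(1)})$ with the formula $2(g-3) + b$ and applying Pick's theorem together with $g = \#(\Delta^{(1)} \cap \mathbb{Z}^2)$ yields $b = -B$. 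So one obtains a surface of class $2H - BR$ through the strict transform $C'$.

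The crux is to produce a second surface $T$ of class $2H - B^{(1)} R$ through $C'$. The natural candidate lives in scroll coordinates $(s:t;\, u_0:u_1:u_2)$, where $u_i$ restricts to $y^i$ on $C_f^{\text{can}}$: the equation $y \cdot f = \sum_{j=-1}^{3} f_j(x) \, y^{j+1} = 0$ admits more than one lift to a quadratic form in $(u_0, u_1, u_2)$, because on $\text{Tor}(\Delta^{(1)})$ the monomial $y^2$ can be written either as $u_0 u_2$ or as $u_1^2$. One such lift reproduces the Veronese-type relation cutting out $\text{Tor}(\Delta^{(1)})$; the other, after normalizing the rolling factors dictated by the slice widths $\deg f_j$ of $\Delta$, defines a genuinely new surface $T$ through $C'$. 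A fiber-conic argument as before forces $T \sim 2H - b'R$, and matching degrees via the combinatorics of $\Delta^{(2)}$ inside $\Delta^{(1)}$ yields $b' = B^{(1)}$.

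With these two surfaces in hand, Schreyer's theorem concludes the proof: $C' = Y \cap Z$ is the complete intersection of surfaces in classes $2H - b_1 R$ and $2H - b_2 R$ with $b_1 + b_2 = g - 5 = B + B^{(1)}$, so the two surfaces just produced must be $Y$ and $Z$, giving $\{b_1, b_2\} = \{B, B^{(1)}\}$. If $B > B^{(1)}$ then $b_1 > b_2$, and Schreyer's uniqueness of $Y$ forces $\mu(Y) = \text{Tor}(\Delta^{(1)})$. The main obstacle is the third step: constructing $T$ explicitly (including verifying irreducibility of the alternative lift, rather than a reducible divisor with $\text{Tor}(\Delta^{(1)})$ as a component) and pinning down its class, which requires careful combinatorial bookkeeping relating the degrees $\deg f_j$ to the scrollar invariants $a_0, a_1, a_2$ and to the lattice-point counts defining $B^{(1)}$.
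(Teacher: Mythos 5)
Your first half coincides with the paper's: reduce to $\Delta^{(1)}\subset\mathbb{R}\times[0,2]$, take the $x$-projection as the $g^1_4$, observe $C_f^{\text{can}}\subset\text{Tor}(\Delta^{(1)})\subset S$, and compute the class of the strict transform $T'$ of $\text{Tor}(\Delta^{(1)})$ as $2H-BR$ via the fiber-conic argument, the degree formula $\deg\text{Tor}(\Delta^{(1)})=2\text{Vol}(\Delta^{(1)})$ and Pick's theorem. That part is fine. The gap is exactly where you flag it: the entire conclusion rests on producing a \emph{second} surface of class $2H-B^{(1)}R$ through $C'$, and you do not produce it. The ``two lifts of $y\cdot f$'' heuristic is not enough: the two lifts differ by a multiple of the relation $u_0u_2-u_1^2$ cutting out $\text{Tor}(\Delta^{(1)})$ in $S$, so one must still check that the resulting rolling-factors divisor has $H^2$-coefficient $2$, that its $R$-coefficient is $-B^{(1)}$ (which is a nontrivial piece of lattice-point bookkeeping involving $\Delta$, not just $\Delta^{(1)}$), that it shares no component with $T'$, and that the construction survives the cases $B=B^{(1)}$ and the smooth plane quintic ($\Delta^{(1)}\cong 2\Sigma$, where $B^{(1)}=-1$ and Petri fails). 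The paper only carries out such a construction in a remark following the theorem, relying on the external results of \cite{CaCo2} and excluding the plane quintic; it is not part of the proof.

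The paper's actual proof avoids the second surface altogether, and this is the idea you are missing. It takes Schreyer's $Y\sim 2H-b_1R$ as given and intersects it with the one surface already in hand: if $Y\neq T'$, then $\mu(Y\cap T')$ is a curve of degree $4(g-3)-2b_1-2B$ containing the canonical curve of degree $2g-2$, so $b_1+B\leq g-5$. On the other hand $2b_1\geq b_1+b_2=g-5$ and $2B\geq B+B^{(1)}=g-5$ when $B\geq B^{(1)}$ (with the parity-refined versions, supplied by the combinatorial Lemma~\ref{lemma_comb}, when $B<B^{(1)}$), forcing all inequalities to be equalities and pinning down $\{b_1,b_2\}=\{B,B^{(1)}\}$; the case $Y=T'$ is immediate. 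So the identity $B+B^{(1)}=g-5$ plus a degree count replaces the explicit construction entirely. If you want to complete your route instead, you would essentially have to reprove the relevant part of \cite{CaCo2}.
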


\begin{proof}
The assumption
that $C_f$ is tetragonal is equivalent to $\text{lw}(\Delta^{(1)}) = 2$ and $\Delta \not \cong 2\Upsilon$.
We can also suppose that $\Delta \not \cong 5\Sigma$, because this case can be reduced to
$$\Delta \cong \conv \{ (1,0),(5,0),(0,5),(0,1) \}$$
by means of a coordinate transformation, as explained in the discussion preceding Question~\ref{intrinsicquestion}. By \cite[Lem.\,5.2]{CaCo1} we can therefore suppose
that
\[ \Delta^{(1)} \subset \left\{ (X,Y)\in\mathbb{R}^2\,|\,0\leq Y\leq 2 \right\} \quad \text{and} \quad \Delta\subset \left\{ (X,Y)\in\mathbb{R}^2\,|\,-1\leq Y\leq 3 \right\}. \]
Then the projection map $U_f\to\mathbb{T}^1:(x,y)\mapsto x$ has degree $4$, i.e.\ it gives rise to a $g_4^1$ on $C_f$.
As remarked in Section~\ref{introsection}, the canonical model $C_f^\text{can}$ obtained using the basis (\ref{canemb}) of $H^0(C_f, K_\Delta)$
satisfies
\[ C_f^\text{can} \subset \text{Tor}(\Delta^{(1)}) \subset \mathbb{P}^{g-1}. \]
The scroll $S$ corresponding to our $g_4^1$ is easily seen to be the Zariski closure of the image of the map
$$\mathbb{T}^3\hookrightarrow \mathbb{P}^{g-1}:(\alpha,\beta, \gamma)\mapsto \left( (\alpha^i)_{(i,0)\in\Delta^{(1)}\cap\mathbb{Z}^2} : (\beta  \alpha^i)_{(i,1)\in\Delta^{(1)}\cap\mathbb{Z}^2} : (\gamma \alpha^i)_{(i,2)\in\Delta^{(1)}\cap\mathbb{Z}^2} \right).$$
(Note that the scrollar invariants $e_1,e_2,e_3$ are precisely the numbers $$\sharp\{(i',j')\in\Delta^{(1)}\cap\mathbb{Z}^2\,|\,j'=j\}-1$$ for $j=0,1,2$, up to order; for a generalization of this observation, see \cite[\S9]{CaCo1}.)
Moreover, one verifies that $S$ contains $\text{Tor}(\Delta^{(1)})$, i.e.\ the above chain of inclusions extends to
\[ C_f^\text{can} \subset \text{Tor}(\Delta^{(1)}) \subset S \subset \mathbb{P}^{g-1}. \]

\noindent Now let $\mu : \mathbb{P}(\mathcal{E}) \rightarrow S$ be as above and denote by $C'$ the strict transform
of $C_f^\text{can}$ under $\mu$. Similarly, denote by $T'$ the strict transform of $\text{Tor}(\Delta^{(1)})$.
Write the divisor
class of $T'$ as $aH+bR$ with $a,b\in\mathbb{Z}$.
Let $F$ be the fiber of $\pi$ above $\alpha \in \mathbb{T}^1 \subset \mathbb{P}^1$. Then $\mu(F)$ is a $\mathbb{P}^2$ whose
intersection with $\text{Tor}(\Delta^{(1)})$ has $\beta = y$ and $\gamma = y^2$ as parameter equations
on $\mathbb{T}^2 \subset \mathbb{P}^2$. In particular this intersection is a conic, so
we have that
$$a=(aH+bR)\cdot H\cdot R=T'\cdot H\cdot R=2.$$
Next, we compute the intersection product $T' \cdot H^2$ in two ways. On the one hand
we find the degree of $\text{Tor}(\Delta^{(1)})$, which equals
$2\text{Vol}(\Delta^{(1)})$ because the Hilbert polynomial of $\text{Tor}(\Delta^{(1)})$ equals
the Ehrhart polynomial of $\Delta^{(1)}$, see \cite[Prop.\,9.4.3]{coxlittleschenck}. On the other hand one has $$T'\cdot H^2=(2H+bR)\cdot H^2=2(g-3)+b.$$
We obtain that $b=2\text{Vol}(\Delta^{(1)})-2(g-3) = -B$, where the latter equality follows from Pick's theorem.
In conclusion, $T' \sim 2H - BR$.
Now 
 \begin{itemize}
   \item if $Y = T'$ then it is immediate that $b_1=B$ and, consequently, $b_2=B^{(1)}$,
   \item if $Y \neq T'$ then if we intersect $Y\sim 2H-b_1R$ and $T'\sim 2H-BR$ on $\mathbb{P}(\mathcal{E})$, we obtain a (possibly reducible) curve
whose image under $\mu$ has degree $$H\cdot (2H-BR)\cdot (2H-b_1R)=4(g-3)-2b_1-2B\leq 4(g-3)-2(g-5)=2g-2.$$
This follows from $2b_1\geq b_1+b_2=g-5$ and $2B\geq B+B^{(1)}=g-5$ if $B\geq B^{(1)}$, and from $2b_1\geq b_1+b_2+1=g-4$ and $2B=g-6$ if $B<B^{(1)}$; see Lemma~\ref{lemma_comb} below.
In both cases, if either one of the inequalities would be strict, then we would run into a contradiction
because $C'$ is contained in
this intersection (and $\mu(C') = C_f^\text{can}$, being a canonical curve, has degree $2g-2$). We conclude that
$b_1=b_2=B=B^{(1)}=\frac{g-5}{2}$ or $b_1=B^{(1)}=\frac{g-4}{2}$ and $b_2=B=\frac{g-6}{2}$.
\end{itemize}
All conclusions follow.
\end{proof}


\begin{remark}
Assume that $C_f$ is not isomorphic to a smooth plane quintic, i.e.\ $\Delta^{(1)}\not\cong 2\Sigma$. Then
by Petri's theorem \cite{saintdonat} the ideal of $C_f^\text{can}$ is generated by quadrics. In this case we can
construct (instances of) Schreyer's surfaces $Y, Z \subset \mathbb{P}(\mathcal{E})$ in a concrete way,
by explicitly giving the defining equations of $\mu(Y),\mu(Z) \subset S$.
Indeed, by \cite[Thm.\,4]{CaCo2} the ideal of $C_f^\text{can}$ is minimally generated by quadrics $$b_1,\ldots,b_r,b'_1\ldots,b'_s,\mathcal{F}_{2,w_1},\ldots,\mathcal{F}_{2,w_t},$$ where
\begin{itemize}
\item the $r={g-3\choose 2}$ binomials $b_i$ generate $\mathcal{I}(S)$,
\item the $s=(4g-6)-\sharp(2\Delta^{(1)}\cap \mathbb{Z}^2)$ binomials $b'_i$ cut $\text{Tor}(\Delta^{(1)})$ out in $S$,
\item $t=\sharp(\Delta^{(2)}\cap\mathbb{Z}^2) = B^{(1)} + 1$ and the quadrics
$\mathcal{F}_{2,w_i}$ are constructed in the explicit manner described in \cite{CaCo2}. Note that there is some freedom
in the way these quadrics arise.
\end{itemize}
Then if $\mathcal{F}_f \subset \mathbb{P}(\mathcal{E})$ denotes the strict transform under $\mu$ of the joint zero locus of
the quadrics $\mathcal{F}_{2,w_i}$, one can verify that $\mathcal{F}_f \sim 2H-B^{(1)}R$, so that one can take $Y =T'$ and $Z = \mathcal{F}_f$ if $B\geq B^{(1)}$, and
$Y=\mathcal{F}_f$ and $Z=T'$ if $B<B^{(1)}$.
\manuallabel{rmk_eqYZ}{\thetheorem}
\end{remark}

\noindent We end this section by explicitly listing the
lattice polygons for which $B\leq B^{(1)}$. 
We will need the following property of two-dimensional lattice polygons of the form $\Delta^{(1)}$.
An edge $\tau$ of a two-dimensional lattice polygon $\Gamma$ is
always supported on a line $a_\tau X + b_\tau Y = c_\tau$ with $a_\tau, b_\tau, c_\tau \in \mathbb{Z}$ and
$a_\tau, b_\tau$ coprime. When signs are chosen appropriately, we can
assume that $\Gamma$ is contained in the half-plane $a_\tau X + b_\tau Y \leq c_\tau$.
Then the line $a_\tau X + b_\tau Y = c_\tau + 1$ is called the \emph{outward shift} of $\tau$. It is denoted
by $\tau^{(-1)}$, and the polygon (which may take vertices outside $\mathbb{Z}^2$)
that arises as the intersection of the half-planes $a_\tau X + b_\tau Y \leq c_\tau + 1$
is denoted by $\Gamma^{(-1)}$.
If $\Gamma = \Delta^{(1)}$ for some lattice polygon $\Delta$, then the outward shifts
of two adjacent edges of $\Gamma$ always intersect in a lattice point, and in fact $\Gamma^{(-1)} = \Delta^{(1)(-1)}$
is a lattice polygon. Moreover, $\Delta \subset \Delta^{(1)(-1)}$, i.e.\ $\Delta^{(1)(-1)}$ is the maximal
lattice polygon with respect
to inclusion for which the convex hull of the interior lattice points equals $\Delta^{(1)}$.
See \cite[\S4]{HaaseSchicho} or \cite[\S2.2]{Koelman} for proofs.\\

\noindent Even though the following statement is purely combinatorial, given its geometric interpretation, it is natural to abbreviate $g = \sharp (\Delta^{(1)} \cap \mathbb{Z}^2)$.
Similarly, we will write $g^{(1)} = \sharp (\Delta^{(2)} \cap \mathbb{Z}^2)$.

\begin{lemma} \label{lemma_comb}
Let $\Delta$ be a lattice polygon with $\emph{lw}(\Delta^{(1)}) = 2$.
Then we have:
\begin{itemize}
\item $B<B^{(1)}$ if and only if
$$\Delta^{(1)}\cong\Gamma_{4k+4}:=\conv\,\{(0,0),(k,0),(2k+2,1),(k+1,2),(1,2)\}$$
for some integer $k\geq 0$. In this case $g=4k+4$, $B=2k-1$ and $B^{(1)}=2k$.
\item $B=B^{(1)}$ if and only if either
$$\Delta^{(1)}\cong\Gamma_{4k+5}^m:=\conv\,\{(0,0),(k,0),(2k+2,1),(k+m,2),(m,2),(0,1))\}$$
for some integers $k\geq 0$ and $0\leq m\leq k+2$ (in these cases, $g=4k+5$ and $B=B^{(1)}=2k$), or
$$\Delta^{(1)}\cong\Gamma_{4k+3}:=\conv\,\{(0,0),(k,0),(2k+1,1),(k+1,2),(1,2)\}$$
for some integer $k\geq 1$ (in this case, $g=4k+3$ and $B=B^{(1)}=2k-1$), or
$$\Delta^{(1)}\cong\Gamma_{4k+1}:=\conv\,\{(0,0),(k,0),(2k,1),(k,2),(1,2)\}$$
for some integer $k\geq 2$ (in this case, $g=4k+1$ and $B=B^{(1)}=2k-2$).
\end{itemize}
\end{lemma}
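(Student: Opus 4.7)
Since $\text{lw}(\Delta^{(1)}) = 2$, after a unimodular change of coordinates I may assume that $\Gamma := \Delta^{(1)}$ lies in the horizontal strip $\{0 \leq Y \leq 2\}$ and touches both bounding lines. For $j \in \{0,1,2\}$ let the slice $\Gamma \cap \{Y = j\}$ contain $d_j$ lattice points; I abbreviate $a = d_0$, $b = d_1$, $c = d_2$, so that $g = a+b+c$ and $a,c \geq 1$. Write the middle slice as the real interval $[L,R]$ and let $\epsilon_L, \epsilon_R \in \{0,1\}$ record whether $L, R \in \mathbb{Z}$.

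The starting point is combinatorial bookkeeping: every lattice point of $\Gamma$ on the extreme slices $Y = 0, 2$ lies on $\partial \Gamma$, while on the middle slice only the endpoints (when these happen to be lattice points) do. This gives $\sharp(\partial \Gamma \cap \mathbb{Z}^2) = a + c + \epsilon_L + \epsilon_R$. Similarly, since lattice points in $\Delta^{(2)}$ are exactly the interior lattice points of $\Gamma$ (the interior of a convex set being convex), one has $\sharp(\Delta^{(2)} \cap \mathbb{Z}^2) = b - \epsilon_L - \epsilon_R$. Substituting into the definitions, $B \leq B^{(1)}$ reduces to the single arithmetic inequality
\[
b \geq a + c + 2(\epsilon_L + \epsilon_R) - 3.
\]

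Convexity now supplies the main quantitative input. Writing $\alpha_0 \leq \beta_0$ and $\alpha_2 \leq \beta_2$ for the $x$-coordinates of the bottom and top edges of $\Gamma$, convexity yields $L \leq \tfrac{\alpha_0+\alpha_2}{2}$ and $R \geq \tfrac{\beta_0+\beta_2}{2}$, so the middle width $w_1 = R - L$ is at least $\tfrac{a+c-2}{2}$, with the excess equal to $\delta_L + \delta_R$ where $\delta_L := \tfrac{\alpha_0+\alpha_2}{2} - L \geq 0$ and $\delta_R := R - \tfrac{\beta_0+\beta_2}{2} \geq 0$. Moreover $b \leq w_1 + 1$, with equality iff $\epsilon_L = \epsilon_R = 1$. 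The key rigidity constraint is that a strict bend ($\delta_L > 0$ or $\delta_R > 0$) forces the corresponding middle-slice endpoint to be a vertex of $\Gamma$, hence a lattice point; so $\epsilon_L = 0$ forces $\delta_L = 0$, and similarly on the right. Combining these bounds with the displayed inequality leads to
\[
\delta_L + \delta_R \geq \tfrac{a+c}{2} + 2(\epsilon_L + \epsilon_R) - 3,
\]
which, together with the above sign restrictions, severely limits the admissible shapes.

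The remaining work is a finite case analysis on $(\epsilon_L, \epsilon_R) \in \{0,1\}^2$ and on whether $a = 1$ or $c = 1$. In each surviving case I pin down $\alpha_0, \beta_0, \alpha_2, \beta_2, L, R$ up to integer translation and to the horizontal sliding of the top edge along its supporting line; the admissible slide positions yield exactly the discrete parameter $m$ appearing in the family $\Gamma_{4k+5}^m$, whereas $\Gamma_{4k+4}$, $\Gamma_{4k+3}$ and $\Gamma_{4k+1}$ are rigid. Matching the outputs against these four families---with $\Gamma_{4k+4}$ responsible for the strict inequality $B < B^{(1)}$ and the remaining three accounting for the equality $B = B^{(1)}$---completes the classification. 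I expect the main obstacle to be combinatorial rather than conceptual: carefully distinguishing the near-extremal sub-cases, discarding those that violate convexity or parity constraints, and ruling out further unimodular collapses between distinct listed families.
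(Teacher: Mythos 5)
Your bookkeeping is correct: with $\Gamma=\Delta^{(1)}\subset\{0\leq Y\leq 2\}$ one indeed has $\sharp(\partial\Gamma\cap\mathbb{Z}^2)=a+c+\epsilon_L+\epsilon_R$ and $\sharp(\Delta^{(2)}\cap\mathbb{Z}^2)=b-\epsilon_L-\epsilon_R$, so $B\leq B^{(1)}$ becomes $b\geq a+c+2(\epsilon_L+\epsilon_R)-3$, which via $b\leq w_1+1$ yields the lower bound on $\delta_L+\delta_R$ that you display. But this is where the argument stops short of a proof: everything you have derived is a \emph{lower} bound on $\delta_L+\delta_R$, i.e.\ it only says the middle slice must be long relative to the outer slices. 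Nothing in your plan bounds $\delta_L+\delta_R$ from above, so the claimed ``severe limitation of admissible shapes'' does not follow. Concretely, the triangles $T_N=\conv\{(0,0),(N,1),(0,2)\}$ with $N\geq 3$ have lattice width $2$, $a=c=1$, $\epsilon_L=\epsilon_R=1$, $\delta_L=0$, $\delta_R=N$; they satisfy your inequality and all your sign restrictions, and have $B=0<B^{(1)}=N-2$, yet they are pairwise inequivalent and (already for $N=3$, genus $6\not\equiv 0\bmod 4$) not equivalent to any $\Gamma_{4k+4}$.

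The missing ingredient is the hypothesis that $\Gamma$ is not an arbitrary lattice-width-$2$ polygon but one of the form $\Delta^{(1)}$, which you never invoke. This is exactly what excludes the $T_N$ for $N\geq 3$: the outward shifts of the edges on $X=0$ and on $X-NY=0$ meet at $(-1,-2/N)\notin\mathbb{Z}^2$, so $\Gamma^{(-1)}$ is not a lattice polygon. The paper's proof uses precisely this property (recalled just before the lemma: for $\Gamma=\Delta^{(1)}$ the outward shifts of adjacent edges meet in lattice points) to extract the complementary inequalities $k,\ell\geq\frac{g^{(1)}-1}{2}$ on Koelman's normal forms --- in effect a lower bound $B\geq B^{(1)}-1$ --- which, combined with $B\leq B^{(1)}$, forces near-equality and hence rigidity. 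You need the analogous upper bound on $\delta_L$ and $\delta_R$ in your coordinates before any case analysis can close. Apart from this, your slice-counting setup is a legitimate alternative to citing Koelman's classification (and handles the small cases $g^{(1)}\leq 1$ uniformly, which the paper treats separately), but as written the proof has a genuine gap.
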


\begin{proof}
First we consider the polygons with $g^{(1)}$ equal to $0$ and $1$ separately.
If $g^{(1)}=0$ then $\Delta^{(1)}\cong 2\Sigma$, hence $B=2>B^{(1)}=-1$. If $g^{(1)}=1$ then $B^{(1)}=0$, hence $B\leq B^{(1)}$ if and
only if $g\leq 5$. It is easy to check that there is one such polygon in genus 4
(namely $\Delta\cong 2\Upsilon$, so $\Delta^{(1)}\cong \Upsilon=\Gamma_4$) and
three such polygons in genus $5$ (corresponding to $\Delta^{(1)} \cong \Gamma_5^0, \Gamma_5^1, \Gamma_5^2$). Each of these
appear in the classification.

If $g^{(1)}\geq 2$, we can use Koelman's classification \cite[Section 4.3]{Koelman} of lattice polygons $\Gamma$ with
lattice width $2$.
One can assume that $\Gamma=\Delta^{(1)}$ is contained
in the strip $\{(X,Y)\in\mathbb{R}^2\,|\,0\leq Y\leq 2\}$.
Koelman subdivided these polygons into three types:

\begin{itemize}
\item \emph{Type $0$: there is no boundary lattice point of $\Gamma$ with $Y=1$.} \\
Then up to equivalence $\Gamma=\Delta^{(1)}$ is of the form
\begin{center}
\includegraphics[height=2.5cm]{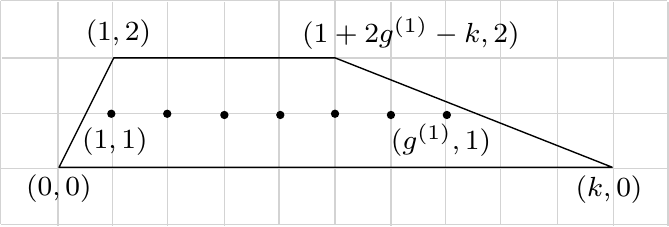}
\end{center}
with $g^{(1)}\leq k\leq 2g^{(1)}$. One sees that $B=2g^{(1)}-2$ and $B^{(1)}=g^{(1)}-1$, so $B\leq B^{(1)}$ implies that $g^{(1)}\leq 1$: a contradiction.

\item \emph{Type $1$: there is one boundary lattice point of $\Gamma$ with $Y=1$.} \\
Up to equivalence $\Gamma=\Delta^{(1)}$ is of the form
\begin{center}
    \includegraphics[height=2.5cm]{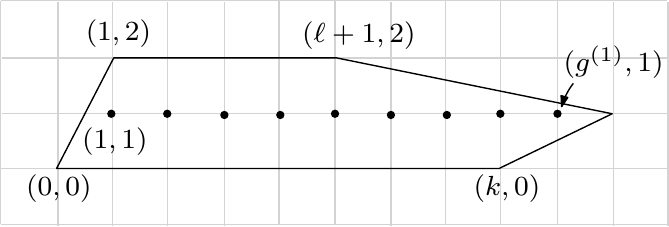}
\end{center}
with $0\leq k\leq 2g^{(1)} + 1$ and
$$\left\{ \begin{array}{l l}
0\leq \ell\leq k & \quad \text{if } \quad 0\leq k\leq g^{(1)},\\
0\leq \ell\leq 2g^{(1)}-k+1 & \quad \text{if } \quad g^{(1)}<k\leq 2g^{(1)} + 1.
\end{array} \right.$$
Since moreover $\Gamma$ is an interior lattice polygon we have that
$\Gamma^{(-1)}$ takes its vertices inside $\mathbb{Z}^2$, leading to the inequalities $k\geq \frac{g^{(1)}-1}{2}$ and $\ell\geq \frac{g^{(1)}-1}{2}$.
For this type, $B=k+\ell-1\geq g^{(1)}-2$ and $B^{(1)}=g^{(1)}-1$.
So if $B\leq B^{(1)}$ then either $k=\ell=\frac{g^{(1)}-1}{2}$ (and $g=4k+4\equiv 0\bmod 4$), or $k=\ell=\frac{g^{(1)}}{2}$ (and $g=4k+3\equiv 3\bmod 4$), or $k=\frac{g^{(1)}+1}{2}$ and $\ell=\frac{g^{(1)}-1}{2}$ (and $g=4k+1\equiv 0\bmod 4$). We find back the polygons $\Gamma_{4k+1},\Gamma_{4k+3},\Gamma_{4k+4}$ from the statement of the lemma.

\item \emph{Type $2$: there are two boundary lattice points of $\Gamma$ with $Y=1$.} \\
Up to equivalence $\Gamma=\Delta^{(1)}$ is of the form
\begin{center}
    \includegraphics[height=2.5cm]{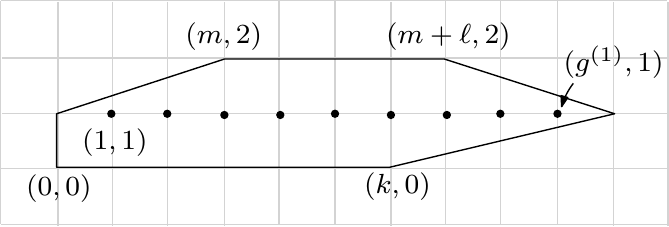}
\end{center}
with $0\leq m\leq g^{(1)}+1$, $0\leq k\leq 2g^{(1)}+2-2m$ and
$$\left\{ \begin{array}{l l}
0\leq \ell\leq k & \quad \text{if } \quad 0\leq k\leq g^{(1)}+1-m,\\
0\leq \ell\leq 2g^{(1)}-k-2m+2 & \quad \text{if } \quad g^{(1)}+1-m<k\leq 2g^{(1)}+2-2m.
\end{array} \right.$$
Since moreover $\Gamma$ is an interior lattice polygon, we also get the inequalities $k\geq \frac{g^{(1)}-1}{2}$ and $\ell\geq \frac{g^{(1)}-1}{2}$. If $B\leq B^{(1)}$ then since $B=k+\ell\geq g^{(1)}-1=B^{(1)}$, we have that $k=\ell=\frac{g^{(1)}-1}{2}$, $B=B^{(1)}=2k$ and $g=4k+5$. So we get the polygons $\Gamma_{4k+5}^m$ from the statement.
\end{itemize}
This concludes the proof.
\end{proof}

\begin{remark}
For each lattice polygon $\Gamma = \Gamma_g, \Gamma_g^m$ appearing in the statement of the lemma, there is only one polygon $\Delta$ for which $\Delta^{(1)}=\Gamma$, namely $\Delta=\Gamma^{(-1)}$. Note that $(\Gamma_4)^{(-1)}\cong 2\Upsilon$ and recall that a $2\Upsilon$-non-degenerate curve is trigonal, rather than tetragonal.
\end{remark}

\section{From toric surfaces to polygons} \label{section_from_surface_to_polygon}

This section is devoted to proving Theorem~\ref{lem_equiv}. As an a priori remark, note
that it is important to impose that $\text{Tor}(\Delta)$ and $\text{Tor}(\Delta')$ are obtained from one another
using a transformation of $\mathbb{P}^N$,
rather than just isomorphic. For instance, let
\[ \Delta = \conv \{ (0,0), (3,0), (3,2), (0,2) \} \quad \ \text{and} \quad \ \Delta' = \conv \{ (0,0), (5,0), (5,1), (0,1) \}, \]
then $\text{Tor}(\Delta), \text{Tor}(\Delta') \subset \mathbb{P}^{11}$ are isomorphic (because their normal fans are the same), but not
projectively equivalent, as they have different degrees ($6$ resp.\ $5$). Here clearly $\Delta \not \cong \Delta'$.

\begin{proof}
We assume familiarity with
the theory of divisors on toric surfaces, along
the lines of \cite[\S3]{CaCo1}. Notation-wise, we will write 
\begin{itemize}
\item $\Sigma_\Delta$ for the (inner) normal fan associated to
a given two-dimensional lattice polygon $\Delta$, and 
\item $\Delta_D$ for the polygon (well-defined up to translation) corresponding
to a Weil divisor (or a Cartier divisor, or an invertible sheaf) $D$ on a given toric surface.
\end{itemize}
The proof then works as follows. Let $\Delta$ and $\Delta'$ be as in the statement of Theorem~\ref{lem_equiv}.
The projective transformation induces an automorphism
$\text{Tor}(\Delta) \rightarrow \text{Tor}(\Delta)$
that sends $\mathcal{O}_{\text{Tor}(\Delta)}(1)$ to $\mathcal{O}_{\text{Tor}(\Delta')}(1)$.
Because
\[ \Delta \cong \Delta_{\mathcal{O}_{\text{Tor}(\Delta)}(1)} \qquad \text{and} \qquad
\Delta' \cong \Delta_{\mathcal{O}_{\text{Tor}(\Delta')}(1)} \]
it suffices to prove the following general statement: if
\[ \iota : \text{Tor}(\Delta) \stackrel{\cong}{\longrightarrow}
\text{Tor}(\Delta') \] is an isomorphism between two toric surfaces, and if $D$ is a Weil divisor
on $\text{Tor}(\Delta)$, then
\[ \Delta_D \cong \Delta_{\iota(D)}.\]
Now it is known that two isomorphic toric varieties always admit a toric
isomorphism between them \cite[Thm.~4.1]{Berchtold}, i.e.\ an isomorphism
that is induced by a $\text{GL}_2(\mathbb{Z})$-transformation taking $\Sigma_{\Delta}$ to $\Sigma_{\Delta'}$. It is clear that such an isomorphism
preserves polygons (up to equivalence). Therefore we may assume that $\Sigma_{\Delta} = \Sigma_{\Delta'}$ and
that $\iota$ is an automorphism of $\text{Tor}(\Delta)$. Every such automorphism can be written as
the composition of
\begin{itemize}
  \item a toric automorphism,
  \item the automorphism induced by the action of an element of $\mathbb{T}^2$,
  \item a number of automorphisms of the form $e_v^\lambda$, where $\lambda \in k$ and $v \in \mathbb{Z}^2$ is a 
  \emph{column vector} of $\Delta$, i.e.\ a
  primitive vector $v$ for which there exists an edge $\tau \subset \Delta$ such that $u + v \in \Delta$
  for all $u \in (\Delta \setminus \tau) \cap \mathbb{Z}^2$. To describe $e_v^\lambda$ explicitly,
  assume that $v = (0,-1)$ and that $\tau$ lies horizontally (the general case can be reduced to this case by using an appropriate unimodular transformation). Then $\text{Tor}(\Delta)$ can be viewed as a compactification
  of $\mathbb{T}^2 \cup (\text{$x$-axis})$ rather than just $\mathbb{T}^2$. On $\mathbb{T}^2 \cup (\text{$x$-axis})$,
  $e_v^\lambda$ acts as $(x,y) \mapsto (x,y + \lambda)$. The column vector property ensures
  that this extends nicely to all of $\text{Tor}(\Delta)$.

  \emph{Example.} Let $\Delta = [0,1] \times [0,1]$ and consider the map
  \[ \varphi : \mathbb{T}^2 \cup (\text{$x$-axis}) \hookrightarrow \text{Tor}({\Delta}) : (x,y) \mapsto (1,x,y,xy). \]
  The point $(x,y + \lambda)$ is mapped to $(1:x:y + \lambda: xy + \lambda x)$. So here
  \[ e_{(0,-1)}^\lambda : (X_{0,0} : X_{1,0} : X_{0,1} : X_{1,1}) \mapsto (X_{0,0} : X_{1,0} : X_{0,1} + \lambda X_{0,0} : X_{1,1} + \lambda X_{1,0}). \]
\end{itemize}
See \cite[Thm.~3.2]{brunsgub} for a proof of this statement, along with a more elaborate discussion.
Now the first type of automorphisms preserves polygons up to equivalence, as before.
The second type also preserves polygons because it preserves torus-invariant Weil divisors.
As for the third type, let $D_\tau$ be the torus-invariant prime divisor
corresponding to the base edge $\tau$ of $v$. Then by adding a divisor
of the form $\text{div}(x^iy^j)$ if needed, one can always find
a torus-invariant Weil divisor that is equivalent to $D$ and whose support does not contain $D_\tau$; see \cite[\S4]{CaCo1} for more details.
But such a divisor is preserved by $e_v^\lambda$, hence the theorem follows.
\end{proof}

\section{Intrinsicness for tetragonal curves} \label{intrinsicness}

We are ready to explain why intrinsicness holds for lattice polygons $\Delta$ satisfying 
\[ \text{lw}(\Delta^{(1)}) = 2 \quad \ \text{and} \quad \ B \geq B^{(1)} + 2, \]
that is, for the polygons of type (e) from the introduction. Let $C$ be a $\Delta$-non-degenerate curve. Then it is
a tetragonal curve (indeed, $B \geq B^{(1)} + 2$ implies $\Delta \not \cong 2\Upsilon$) whose Schreyer invariants
$b_1, b_2$ satisfy $b_1 \geq b_2 + 2$. By Theorem~\ref{theorem_schreyer} we find that Schreyer's surface $\mu(Y) \subset \mathbb{P}^{g-1}$
equals $\text{Tor}(\Delta^{(1)})$. Now 
suppose that $C$ is also $\Delta'$-non-degene\-rate for some two-dimensional lattice polygon
$\Delta'$. By the tetragonality of $C$ we have $\text{lw}(\Delta'^{(1)}) = 2$.
In analogy with the previous notation, write
\[
B' = \sharp(\partial \Delta'^{(1)}\cap\mathbb{Z}^2)-4, \qquad
B'^{(1)} = \sharp(\Delta'^{(2)}\cap\mathbb{Z}^2)-1, \]
so that $\left\{ B', B'^{(1)} \right\} = \{b_1,b_2\}$ by Theorem~\ref{theorem_schreyer}. It follows that either
\[ B' \geq B'^{(1)} + 2 \qquad \text{or} \qquad B'^{(1)} \geq B' + 2. \]
But the latter is impossible by Lemma~\ref{lemma_comb}, which states that $B'^{(1)}$ is at most $B' + 1$. Therefore
$B' > B'^{(1)}$ and, again by Theorem~\ref{theorem_schreyer}, we find that $\mu(Y)$ is given by
$\text{Tor}(\Delta'^{(1)})$. We conclude that $\text{Tor}(\Delta^{(1)})$ and $\text{Tor}(\Delta'^{(1)})$
are equal, possibly modulo a projective transformation. 
Intrinsicness now follows from Theorem~\ref{lem_equiv}.\\

\noindent This argument can be refined. For instance, in genus $g \not \equiv 0 \bmod 4$ it suffices that
$B \geq B^{(1)} + 1$, because
in this case Lemma~\ref{lemma_comb} yields the sharper bound $B'^{(1)} \leq B'$. In genus $g \equiv 2 \bmod 4$ one sees that this
is automatically satisfied.\\ 

\noindent By pushing this type of reasoning, we obtain the following statement.

\begin{theorem} \label{thm_intrinsicness}
Let $\Delta, \Delta'$ be two-dimensional lattice polygons and let there be a curve that is both $\Delta$-non-degenerate
and $\Delta'$-non-degenerate. Suppose that $\emph{lw}(\Delta^{(1)}) = 2$ and define $g = \sharp (\Delta^{(1)} \cap \mathbb{Z}^2) = \sharp (\Delta'^{(1)} \cap \mathbb{Z}^2)$. 
\begin{itemize}
  \item Case $g \equiv 0 \bmod 4$. If $\Delta^{(1)}, \Delta'^{(1)} \not \cong \Gamma_g$ then $\Delta^{(1)} \cong \Delta'^{(1)}$.
  This is automatic if $\sharp (\partial \Delta^{(1)} \cap \mathbb{Z}^2) \geq \sharp (\Delta^{(2)} \cap \mathbb{Z}^2) + 5$.  
  \item Case $g \equiv 1 \bmod 4$. If $\Delta^{(1)}, \Delta'^{(1)} \not \cong \Gamma_g^m$ for all $1 \leq m \leq (g+3)/4$ then
   $\Delta^{(1)} \cong \Delta'^{(1)}$. This is automatic if $\sharp (\partial \Delta^{(1)} \cap \mathbb{Z}^2) \geq \sharp (\Delta^{(2)} \cap \mathbb{Z}^2) + 4$.
  \item Cases $g \equiv 2, 3 \bmod 4$. Here one always has $\Delta^{(1)} \cong \Delta'^{(1)}$.
\end{itemize}
\end{theorem}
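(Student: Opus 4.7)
The strategy extends the argument given just before the theorem. If $C$ is both $\Delta$- and $\Delta'$-non-degenerate, then $\text{lw}(\Delta^{(1)}) = \text{lw}(\Delta'^{(1)}) = 2$, and Theorem~\ref{theorem_schreyer} furnishes the multiset equality $\{B, B^{(1)}\} = \{B', B'^{(1)}\} = \{b_1, b_2\}$, so in particular $|B - B^{(1)}| = |B' - B'^{(1)}|$. Whenever $B > B^{(1)}$ and $B' > B'^{(1)}$ both hold, Theorem~\ref{theorem_schreyer} identifies $\text{Tor}(\Delta^{(1)})$ and $\text{Tor}(\Delta'^{(1)})$ with Schreyer's surface $\mu(Y)$, which is unique up to projective equivalence because $b_1 > b_2$; Theorem~\ref{lem_equiv} then yields $\Delta^{(1)} \cong \Delta'^{(1)}$. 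The plan is to determine when this ``generic'' argument applies, using Lemma~\ref{lemma_comb} to enumerate the exceptional polygons with $B \leq B^{(1)}$, and to treat the remaining cases directly.

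For $g \equiv 2 \bmod 4$ the generic argument applies unconditionally, since Lemma~\ref{lemma_comb} yields no exceptional polygons in this genus. For $g \equiv 3 \bmod 4$ the only exceptional polygon is $\Gamma_g \cong \Gamma_{4k+3}$ with $B = B^{(1)}$: if $\Delta^{(1)} \cong \Gamma_g$, the multiset equality forces $B' = B'^{(1)}$, hence $\Delta'^{(1)} \cong \Gamma_g$ as well (the only possibility by Lemma~\ref{lemma_comb} in this genus) and the conclusion holds trivially; otherwise the generic argument applies. For $g \equiv 0 \bmod 4$ the only exceptional polygon is $\Gamma_g \cong \Gamma_{4k+4}$ with $B = B^{(1)} - 1$: the hypothesis $\Delta^{(1)}, \Delta'^{(1)} \not\cong \Gamma_g$ places us in the generic case. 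For the ``automatic'' refinement, the bound $\sharp(\partial \Delta^{(1)} \cap \mathbb{Z}^2) \geq \sharp(\Delta^{(2)} \cap \mathbb{Z}^2) + 5$ rewrites as $B - B^{(1)} \geq 2$, which excludes $\Gamma_g$ for $\Delta^{(1)}$; the multiset equality then gives $|B' - B'^{(1)}| \geq 2$, excluding it for $\Delta'^{(1)}$ too.

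For $g \equiv 1 \bmod 4$, Lemma~\ref{lemma_comb} lists the exceptional polygons with $B = B^{(1)}$ as $\Gamma_{4k+5}^m$ (for $0 \leq m \leq k+2$, when $g = 4k+5$) together with $\Gamma_{4k'+1}$ (when $g = 4k'+1$ with $k' \geq 2$). If both $\Delta^{(1)}$ and $\Delta'^{(1)}$ avoid this list, the generic argument applies. Otherwise, one enumerates the admissible pairings of exceptional polygons allowed by $\{B, B^{(1)}\} = \{B', B'^{(1)}\}$ and shows that, under the theorem's hypothesis (which rules out $\Gamma_g^m$ for $1 \leq m \leq (g+3)/4$), any such pairing actually forces $\Delta^{(1)} \cong \Delta'^{(1)}$. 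The sufficient condition $\sharp(\partial \Delta^{(1)} \cap \mathbb{Z}^2) \geq \sharp(\Delta^{(2)} \cap \mathbb{Z}^2) + 4$ amounts to $B > B^{(1)}$, hence excludes every exceptional polygon from $\Delta^{(1)}$, and by the multiset equality from $\Delta'^{(1)}$ as well.

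The main obstacle will be the subtlety in $g \equiv 1 \bmod 4$: one must verify that the exclusion list is tight, i.e., that for the leftover exceptional polygons $\Gamma_{4k+5}^0$ and $\Gamma_{4k'+1}$ (which lie outside the excluded range $1 \leq m \leq (g+3)/4$), any other polygon $\Delta'^{(1)}$ carrying the same curve is in fact equivalent to $\Delta^{(1)}$. This presumably requires sharper information than Schreyer's invariants alone — for example, uniqueness of the $g^1_4$ on $C$ in these boundary cases, or the concrete binomial/quadric description of the candidate toric surfaces recalled in Remark~\ref{rmk_eqYZ} — to rule out mixed pairings of distinct exceptional polygons.
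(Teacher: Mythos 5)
Your overall strategy is the paper's: run the ``generic'' argument ($B>B^{(1)}$ and $B'>B'^{(1)}$ force $\mathrm{Tor}(\Delta^{(1)})=\mathrm{Tor}(\Delta'^{(1)})=\mu(Y)$ up to projective transformation, then apply Theorem~\ref{lem_equiv}), and handle the exceptions via Lemma~\ref{lemma_comb}. Your treatment of $g\equiv 0,2,3\bmod 4$ and of both ``automatic'' refinements is correct and matches the paper.

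However, in the case $g\equiv 1\bmod 4$ with $g>5$ there is a genuine gap that you flag but do not close. After the hypothesis removes $\Gamma_g^m$ for $1\le m\le (g+3)/4$, the surviving exceptional polygons are $\Gamma_g$ (i.e.\ $\Gamma_{4k+1}$ with $g=4k+1$) and $\Gamma_g^0$, and these have the \emph{same} value $B=B^{(1)}=\tfrac{g-5}{2}$. So the multiset equality $\{B,B^{(1)}\}=\{B',B'^{(1)}\}$ cannot rule out the mixed pairing $\Delta^{(1)}\cong\Gamma_g$, $\Delta'^{(1)}\cong\Gamma_g^0$; ``enumerating admissible pairings'' using Schreyer's invariants alone does not finish the argument, and your last paragraph concedes as much without supplying the missing ingredient. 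The paper's resolution is exactly along the line you guess at: by \cite[Cor.\,6.3]{CaCo1} the curve carries a \emph{unique} $g^1_4$, and its scrollar invariants are read off combinatorially as the horizontal slice counts $\sharp\{(i',j')\in\Delta^{(1)}\cap\mathbb{Z}^2 \mid j'=j\}-1$ (as recorded in the proof of Theorem~\ref{theorem_schreyer}). These are $\tfrac{g-5}{4},\tfrac{g-1}{4},\tfrac{g-3}{2}$ for $\Gamma_g$ and $\tfrac{g-5}{4},\tfrac{g-5}{4},\tfrac{g-1}{2}$ for $\Gamma_g^0$ --- distinct multisets --- so no single curve can realize both, and in each of the two remaining (unmixed) pairings the conclusion $\Delta^{(1)}\cong\Delta'^{(1)}$ holds. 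You should also note separately that for $g=5$ only $\Gamma_5^0$ survives the exclusion (there is no $\Gamma_{4k+1}$ with $k<2$), so that case reduces to the $g\equiv 3\bmod 4$ pattern with no scrollar-invariant argument needed.
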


\begin{proof}
The cases $g \equiv 0,2 \bmod 4$ follow along the above lines of thought. For the case $g \equiv 3 \bmod 4$ one remarks
that Schreyer's invariants coincide if and only if $B = B^{(1)}$, which by Lemma~\ref{lemma_comb} happens if and only
if $\Delta^{(1)} \cong \Delta'^{(1)} \cong \Gamma_g$. If not then $B \geq B^{(1)} + 1$, and one proceeds as before.

The most subtle case is when $g \equiv 1 \bmod 4$. If $g = 5$ then Schreyer's invariants coincide if and only if 
$\Delta^{(1)} \cong \Delta'^{(1)} \cong \Gamma^0_5$ (indeed, the polygons $\Gamma^1_5$ and
$\Gamma^2_5$ appearing in Lemma~\ref{lemma_comb} were excluded in the statement), so this is analogous to the $g \equiv 3 \bmod 4$ case.
If $g > 5$ then one draws the weaker conclusion that
Schreyer's invariants coincide if and only if
$\Delta^{(1)}$ and $\Delta'^{(1)}$ are among $\Gamma_g$ and $\Gamma^0_g$. To distinguish between both cases, one notes that 
the scrollar invariants $e_1,e_2, e_3 $ are
\[ \frac{g-5}{4}, \frac{g-1}{4}, \frac{g-3}{2} \qquad \text{and} \qquad \frac{g-5}{4}, \frac{g-5}{4}, \frac{g-1}{2}, \]
respectively. Here we implicitly used that our curve carries a unique $g^1_4$ by \cite[Cor.\,6.3]{CaCo1}, so it
does make sense to talk about \emph{the} scrollar invariants. We conclude that 
$\Delta^{(1)} \cong \Delta'^{(1)} \cong \Gamma^0_g$ if the curve has two coinciding scrollar invariants, 
and that $\Delta^{(1)} \cong \Delta'^{(1)} \cong \Gamma_g$ if not.
\end{proof}

\begin{remark}
Note that the theorem remains valid
if we replace `for all $1 \leq m \leq (g+3)/4$' by `for all $m \in \{0, \dots, (g+3)/4 \} \setminus \{i\}$', for whatever
$i$.
\end{remark}

\begin{example}
Let $g\geq 4$ satisfy $g\equiv 0\bmod 4$, and denote by $\Delta_g$ the (unique) lattice polygon
for which $\Delta_g^{(1)} = \Gamma_g$. Then it is possible that a 
$\Delta_g$-non-degenerate curve is also non-degenerate with respect to a lattice polygon $\Delta'$ for which 
$\Delta'^{(1)}\not\cong \Gamma_g$. For instance, consider 
$f=1-x^2y^4-x^{\frac{g}{2}+2}y^2$ and $f'=(y^4-1)x^{\frac{g}{2}+1}+4y^2$. Both polynomials
are non-degenerate with respect to their respective Newton polygons. Note that $\Delta(f) \cong \Delta_g$ and
that $\Delta(f')^{(1)} \not \cong \Gamma_g$. Now the rational maps
$$U_f \to U_{f'}:(x,y)\mapsto\left(x,\frac{1-xy^2}{x^{\frac{g}{4}+1}y}\right)$$
$$U_{f'} \to U_f:(x,y)\mapsto\left(x,\frac{2y}{x^{\frac{g}{4}+1}(1+y^2)}\right)$$ are inverses of each other, 
so $C_f$ and $C_{f'}$ are isomorphic. We conclude 
that $C_f$ is both $\Delta_g$-non-degenerate and $\Delta(f')$-non-degenerate.
\manuallabel{example0mod4}{\thetheorem}
\end{example}


\begin{example}[Example]
We conjecture that for 
each $g\geq 5$ with $g\equiv 1\bmod 4$ and each $0\leq n,m\leq (g+3)/4$, there exists a curve that is both $\Delta_g^n$- and $\Delta_g^m$-non-degenerate. 
Here $\Delta_g^n$ and $\Delta_g^m$ are the unique lattice polygons having $\Gamma_g^n$ and $\Gamma_g^m$
as their respective interiors. 

Loosely speaking, we believe that the following strategy for finding such a curve always works (although we could not prove this). 
From Sections~\ref{introsection} and~\ref{sect_schreyersinv} we know that the 
canonical model $C_f^\text{can}$ of a $\Delta^n_g$-non-degenerate curve $C_f$ satisfies
$C_f^\text{can} \subset \text{Tor}(\Gamma_g^n) \subset S \subset \mathbb{P}^{g-1}$, where $S$
is a rational normal scroll of type
\[ \left( \frac{g-5}{4}, \frac{g-5}{4}, \frac{g-1}{2} \right), \]
and that $C_f^\text{can}$ arises as the intersection of two surfaces $Y$ and $Z$ 
inside the class 
\[ 2H - \frac{g-5}{2} R \]
(the role of $\mu$, which is only relevant for $g = 5$, is ignored for the sake of exposition).
Recall from Remark~\ref{rmk_eqYZ} that 
one can take $Y = \text{Tor}(\Gamma_g^n)$, and $Z = \mathcal{F}_f$. 
The idea is to switch the role of $Y$ and $Z$, in the sense 
that one chooses $f$ such that 
$\mathcal{F}_f = \theta(\text{Tor}(\Gamma^m_g))$ for some $\theta \in \text{Aut}(S) \subset \text{Aut}(\mathbb{P}^{g-1})$.
Because non-degeneracy is generically satisfied, one expects $\theta^{-1}(Y)$ to be of the form $\mathcal{F}_{f'}$
for some $\Delta_g^m$-non-degenerate Laurent polynomial $f'$.

Explicit examples in genus $g = 5$ can be found in our unpublished \texttt{arXiv} paper
\texttt{1304.4997}. For $g=9$ and $\{n,m\}=\{0,3\}$
\begin{figure}[h!]
\centering
     \includegraphics[height=2.5cm]{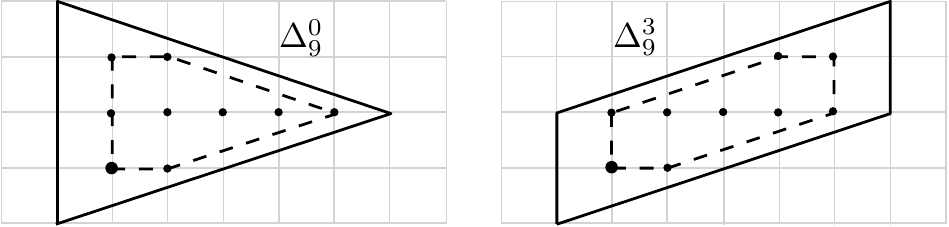}
\end{figure}
we used the above approach to find that the $\Delta^0_9$-non-degenerate Laurent polynomial
\[ \Scale[0.78]{f =8x^5y + 36x^4y + 66x^3y - x^2y^2 + 62x^2y - x^2 + 33xy + 9y - 2x^{-1}y^3 - 2x^{-1}y^2 - 4x^{-1}y - 3x^{-1} - 3x^{-1}y^{-1}}\]
and the $\Delta^3_9$-non-degenerate Laurent polynomial
\[ \Scale[0.78]{f' =2x^5y^3 + x^5y^2 - x^5y - 6x^4y - 15x^3y + 2x^2y^2 - 14x^2y + x^2 - 15xy - 6y - x^{-1}y + 3x^{-1} + 3x^{-1}y^{-1}} \]
define birationally equivalent curves in $\mathbb{T}^2$. To describe
the automorphism $\theta$ explicitly, we need to pick coordinates of $\mathbb{P}^{g-1}$.
When thought of as the ambient space of $\text{Tor}(\Gamma_9^0)$, we will write
\[ \mathbb{P}^{g-1} = \text{Proj}\, V \qquad \text{with $V = k[X_{0,0},X_{1,0},X_{0,1},X_{1,1},X_{2,1},X_{3,1},X_{4,1},X_{0,2},X_{1,2}]$}, \]
where $X_{i,j}$ is the coordinate corresponding to the lattice point $(i,j) \in \Gamma_9^0$ (the origin is understood to
be the bold-marked lattice point). Similarly, when thought of as the ambient space of $\text{Tor}(\Gamma_9^3)$
we write
\[ \mathbb{P}^{g-1} = \text{Proj}\, W \qquad \text{with $W = k[X_{0,0},X_{1,0},X_{0,1},X_{1,1},X_{2,1},X_{3,1},X_{4,1},X_{3,2},X_{4,2}]$}. \]
Then, on the level of coordinate rings, $\theta : V\to W$ can be defined by
\footnotesize $$\begin{pmatrix}\theta(X_{0,1})\\ \theta(X_{1,1})\\ \theta(X_{2,1})\\ \theta(X_{3,1})\\ \theta(X_{4,1})\\ \theta(X_{0,2})\\ \theta(X_{1,2})\\ \theta(X_{0,0})\\ \theta(X_{1,0})\end{pmatrix}=\begin{pmatrix}1&4&6&4&1&0&0&0&0 \\ 1&5&9&7&2&0&0&0&0 \\ 1&6&13&12&4&0&0&0&0 \\
1&7&18&20&8&0&0&0&0 \\ 1&8&24&32&16&0&0&0&0 \\ 1&1&0&0&0&1&1&1&1 \\ 1&2&0&0&0&1&2&1&2 \\ 0&0&0&1&1&2&2&3&3 \\ 0&0&0&1&2&2&4&3&6 \end{pmatrix}\cdot \begin{pmatrix}X_{0,1}\\ X_{1,1}\\ X_{2,1}\\ X_{3,1}\\ X_{4,1}\\ X_{3,2}\\ X_{4,2}\\ X_{0,0}\\ X_{1,0}\end{pmatrix}.$$ \normalsize
We leave it to the reader to verify that $\theta$ maps $S$ to $S$ and sends $\text{Tor}(\Gamma_9^3)$ to $\mathcal{F}_f$ and $\mathcal{F}_{f'}$ to $\text{Tor}(\Gamma_9^0)$ (for an appropriate
choice of defining equations for $\mathcal{F}_f$ and $\mathcal{F}_{f'}$).
\end{example}

\small

\noindent \textsc{Vakgroep Wiskunde, Universiteit Gent}\\
\noindent \textsc{Krijgslaan 281, 9000 Gent, Belgium}\\
\noindent \emph{E-mail address:} \verb"wouter.castryck@gmail.com"\\

\noindent \textsc{Dept.\ of Mathematics and Applied Mathematics, University of Cape Town}\\
\noindent \textsc{Private Bag X1, Rondebosch 7701, South Africa}\\
\noindent \emph{E-mail address:} \verb"filip.cools@uct.ac.za"\\


\begin{thebibliography}{99}
\bibitem{Berchtold} Florian Berchtold, \emph{Lifting of morphisms to quotient presentations},  Manuscripta Mathematica \textbf{110}(1), pp.\ 33-44 (2003)
\bibitem{brunsgub} Winfried Bruns, Joseph Gubeladze, \emph{Polytopal linear groups}, Journal of Algebra \textbf{218}(2), pp.\ 715-737 (1999)
\bibitem{CaCo2} Wouter Castryck, Filip Cools, \emph{A minimal set of generators for the canonical ideal of a non-degenerate curve}, to appear in Journal of the Australian Mathematical Society 
\bibitem{CaCo1} Wouter Castryck, Filip Cools, \emph{Linear pencils encoded in the Newton polygon}, preprint (arXiv:1402.4651)
\bibitem{hirzebruch} Wouter Castryck, Filip Cools, \emph{On the intrinsicness of the Newton polygon for curves in Hirzebruch surfaces}, article in preparation
\bibitem{latticesize} Wouter Castryck, Filip Cools, \emph{The lattice size of a lattice polygon}, preprint (arXiv:1402.4652)

 \bibitem{castryckvoight} Wouter Castryck, John Voight, \emph{On nondegeneracy of curves}, Algebra \& Number Theory \textbf{3}(3), pp.\ 255-281 (2009)
 \bibitem{coxlittleschenck} David Cox, John Little, Hal Schenck, \emph{Toric varieties}, Graduate Studies in Mathematics \textbf{124}, American Mathematical Society (2011)
\bibitem{HaaseSchicho} Christian Haase, Josef Schicho, \emph{Lattice polygons and the number $2i+7$}, American Mathematical Monthly \textbf{116}(2), pp.\ 151-165 (2009)
 \bibitem{Khovanskii} Askold G.\ Khovanskii, \emph{Newton polyhedra and toroidal varieties},
Functional Analysis and Its Applications \textbf{11}(4), pp.\ 289-296 (1977)
 \bibitem{Koelman} Robert J.\ Koelman, \emph{The number of moduli of families of curves on toric surfaces},
 Ph.D. thesis, Katholieke Universiteit Nijmegen (1991)
 \bibitem{Koelman2} Robert J.\ Koelman, \emph{A criterion for the ideal of a projectively embedded toric surface to be generated by quadrics}, Beitr\"age zur Algebra und Geometrie \textbf{34}, pp.\ 57-62 (1993)
 \bibitem{saintdonat} Bernard Saint-Donat, \emph{On Petri's analysis of the linear system of quadrics through a canonical curve}, Mathematische Annalen \textbf{206}, pp.\ 157-175 (1973)
 \bibitem{Schreyer} Frank-Olaf Schreyer, \emph{Syzygies of Canonical Curves and Special Linear Series}, Mathematische Annalen \textbf{275}, pp.\ 105-137 (1986)
\end{thebibliography}
\end{document}